\renewcommand{\to}{\longrightarrow}
\newcommand{\p}{\varphi}
\newtheorem{Thm}{Theorem}[section]
\newtheorem{Prop}[Thm]{Proposition}
\newtheorem{Lemma}[Thm]{Lemma}
{\theoremstyle{definition}
}
{\theoremstyle{remark}
}
\theoremstyle{remark}
\theoremstyle{remark}
\theoremstyle{remark}
\theoremstyle{remark}
\numberwithin{equation}{section}
\title[The algebra of the Catalan monoid as an incidence algebra]{The algebra of the Catalan monoid as an incidence algebra: a simple proof}
\author{Stuart Margolis\ and Benjamin Steinberg}
\address{Department of Mathematics\\
Bar Ilan University\\ 52900 Ramat Gan \\ Israel \and Department of Mathematics\\
    City College of New York\\
    Convent Avenue at 138th Street\\
    New York, New York 10031\\
    USA}
\thanks{}
\email{margolis@math.biu.ac.il\and bsteinberg@ccny.cuny.edu}
\date{\today}
\subjclass[2000]{20M25,16G10,05E99}
\begin{document}
\begin{abstract}
We give a direct straightforward proof that there is an isomorphism between the algebra of the Catalan monoid $C_n$, that is, the monoid of all order-preserving, weakly increasing self-maps $f$ of $[n]=\{1,\ldots, n\}$, over any commutative ring with identity and the incidence algebra of a certain poset over the ring.
\end{abstract}
\maketitle

\section{Introduction}

The \emph{Catalan monoid} $C_n$ is the monoid of all order-preserving, weakly increasing self-maps $f$ of $[n]=\{1,\ldots, n\}$.  It is well known that the cardinality of $C_n$ is the $n$-th Catalan number. See for example, \cite[Ex.6.19(s)]{Stanley2}. $C_n$ has appeared in many guises in combinatorics and combinatorial semigroup theory under different names. For example, it has also been called the monoid of non-decreasing parking
functions or, simply, the monoid of order-decreasing and order-preserving functions \cite{Catalancombo, Catalanmonoid, GM}.

 It was first observed by Hivert and Thi\'ery \cite{HivThiery}, via an indirect proof, that $\Bbbk C_n$ is isomorphic to the incidence algebra of a certain poset $P_{n-1}$ for any field $\Bbbk$; a different indirect proof using the representation theory of $\mathscr J$-trivial monoids can be found in the second author's book~\cite{BenBook}.  Grensig gave a direct isomorphism from the incidence algebra of $P_{n-1}$ to $\Bbbk C_n$ for any base commutative ring with unit $\Bbbk$, but her proof is quite involved and long because of the technical recursive construction of a complete set of orthogonal idempotents.

Here we show that there is a straightforward direct isomorphism from $\Bbbk C_n$ to the incidence algebra of $P_{n-1}$ over any base commutative ring with unit whose details are trivial to check.  The proof is similar to that used by the second author for inverse monoids~\cite{mobius1} and Stein for more general monoids \cite{steinpartial, Ehresmann, EhresmannErrata}.  The complication in previous approaches is avoided here as we show that the isomorphism is given by a unipotent upper triangular $0/1$-matrix.

\section{Combinatorics of the Catalan monoid}

If $n\geq 0$ is an integer, let $P_n$ be the poset consisting of subsets of $[n]$ ordered by $X\leq Y$ if and only if $|X|=|Y|$ and if $X=\{x_1<\cdots<x_k\}$ and $Y=\{y_1<\cdots<y_k\}$, then $x_i\leq y_i$ for $i=1,\ldots, k$. We shall also need the following refinement of this order given by $X\preceq Y$ if $|X|<|Y|$ or if $|X|=|Y|$ and $X\leq Y$.

%This is connected to the Bruhat order on Grassmannians \cite{GrassBruhat}.

There is a well-known bijection between the Catalan monoid $C_{n+1}$ and the set of ordered pairs $(X,Y)$ of elements of $P_n$ with $X\leq Y$ given as follows.  If $X\leq Y$, define $f_{X,Y}\colon [n+1]\to [n+1]$ in $C_{n+1}$ by
\[f_{X,Y}(i) = \begin{cases}y_1, &\text{if}\ 1\leq i\leq x_1\\
                            y_j, &\text{if}\ x_{j-1}<i\leq x_j\\
                            n+1, &\text{if}\ i>x_k \end{cases}\]
where we have retained the notation of the previous paragraph for $X$ and $Y$. The condition that $X\leq Y$ guarantees that $f_{X,Y}$ is order preserving and weakly increasing. Conversely, if $f\in C_{n+1}$, let $Y=f([n+1])\setminus \{n+1\}$ and let $X=\{i\in f^{-1}(Y)\mid i=\max\{f^{-1}(f(i))\}$; so $X$ consists of the maximum element of each partition block of $f$ except the block of $n+1$.  Then it is straightforward to check from $f$ being order-preserving and weakly increasing that $X\leq Y$ and that $f=f_{X,Y}$.  See~\cite[Ch. 17 Sec. 5]{BenBook} for details where we note that a slightly different convention was used.

Let us say that $S\subseteq [n]$ is a \emph{partial cross-section} for $f\in C_{n+1}$ if $n+1\notin f(S)$ and $f|_S$ is injective, i.e., $|S|=|f(S)|$.
We denote by $\mathsf{PCS}(f)$ the set of partial cross-sections of $f$.  The following proposition is straightforward from the definitions and so we omit the proof.

\begin{Prop}\label{p:basic.facts}
Let $S$ be a partial cross-section for $f=f_{X,Y}$ in $C_{n+1}$.
\begin{enumerate}
\item $S\preceq X$.
\item $S\leq f(S)$.
\item $f(S)\subseteq Y$, hence $f(S)\preceq Y$.
\item $X\in \mathsf{PCS}(f)$.
\end{enumerate}
\end{Prop}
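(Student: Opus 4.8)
The plan is to first make the fibre structure of $f=f_{X,Y}$ completely explicit and then read off all four items from it. Writing $X=\{x_1<\cdots<x_k\}$, $Y=\{y_1<\cdots<y_k\}$ and setting $x_0=0$, the defining formula says that the nonempty fibres of $f$ are precisely the consecutive intervals $(x_{j-1},x_j]$ for $j=1,\ldots,k$, on which $f$ is constantly equal to $y_j$, together with the terminal interval $(x_k,n+1]$, on which $f$ is constantly $n+1$; the values $y_1,\ldots,y_k,n+1$ are pairwise distinct. One also records here the elementary fact that $f(i)\geq i$ for every $i$: on $(x_{j-1},x_j]$ one has $f(i)=y_j\geq x_j\geq i$ using $x_j\leq y_j$, and on $(x_k,n+1]$ one has $f(i)=n+1\geq i$.

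For (1) and (4) I would argue as follows. Since $n+1\notin f(S)$, the set $S$ is disjoint from the terminal fibre, so $S\subseteq [1,x_k]=\bigsqcup_{j=1}^{k}(x_{j-1},x_j]$; and since $f|_S$ is injective while $f$ is constant on each $(x_{j-1},x_j]$, the set $S$ meets each of these $k$ intervals at most once. Hence $|S|\leq k=|X|$, which already gives $S\preceq X$ unless $|S|=|X|$. If $|S|=|X|=k$, then $S$ meets each $(x_{j-1},x_j]$ exactly once, and because these intervals are listed in increasing order the element of $S$ lying in the $j$-th interval is the $j$-th smallest element $s_j$ of $S$; thus $s_j\leq x_j$ for all $j$, i.e.\ $S\leq X$, which completes (1). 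For (4), the formula gives $f(x_j)=y_j$ for each $j$ (here $x_{j-1}<x_j$ when $j\geq 2$, and $1\leq x_1$), so $f(X)=Y\subseteq[n]$ has exactly $k$ elements; hence $f|_X$ is injective and $n+1\notin f(X)$, so $X\in\mathsf{PCS}(f)$.

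For (2), write $S=\{s_1<\cdots<s_m\}$; since $f$ is order-preserving and $f|_S$ is injective, we get $f(s_1)<\cdots<f(s_m)$, so the $j$-th smallest element of $f(S)$ is $f(s_j)$. Using $f(i)\geq i$ we obtain $s_j\leq f(s_j)$ for every $j$, and together with $|S|=|f(S)|$ this is exactly $S\leq f(S)$. For (3), $f(S)\subseteq f([n+1])\setminus\{n+1\}=Y$ since $n+1\notin f(S)$; and any $A\subseteq B$ satisfies $A\preceq B$ (if $|A|<|B|$ this is the definition of $\preceq$, and if $|A|=|B|$ then $A=B$), so in particular $f(S)\preceq Y$.

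There is no genuine obstacle here: the only points needing a moment's care are the identification of the fibres of $f_{X,Y}$ with the consecutive intervals $(x_{j-1},x_j]$, the observation that every member of the Catalan monoid is extensive (equivalently, that each $f_{X,Y}$ is, via $x_i\leq y_i$), and the remark that once $|S|=|X|$ the ``one element per interval'' condition pins the $j$-th element of $S$ to the $j$-th interval. Everything else is bookkeeping, which is why the statement is routine from the definitions.
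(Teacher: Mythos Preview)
Your argument is correct and is exactly the routine verification the authors had in mind: the paper itself omits the proof, stating only that the proposition ``is straightforward from the definitions.'' Your explicit description of the fibres of $f_{X,Y}$ and the extensivity $f(i)\geq i$ is the natural way to unpack those definitions, and each of the four items follows as you wrote.
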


The following lemma is key to our proof.

\begin{Lemma}\label{l:product}
Let $f,g\in C_{n+1}$.  Then $S\in \mathsf{PCS}(fg)$ if and only if $S\in \mathsf{PCS}(g)$ and $g(S)\in \mathsf{PCS}(f)$.
\end{Lemma}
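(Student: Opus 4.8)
The plan is to check both implications straight from the definition of partial cross-section, leaning on only two elementary observations: that the image of a finite set never grows, i.e. $|h(T)|\le |T|$ for any map $h$ and finite $T$, and that every element of $C_{n+1}$ fixes $n+1$ (a weakly increasing self-map $h$ of $[n+1]$ satisfies $h(n+1)\ge n+1$, forcing $h(n+1)=n+1$). Throughout I read the product $fg$ as the composite $i\mapsto f(g(i))$, so that $(fg)(S)=f(g(S))$, consistent with the hypothesis ``$g(S)\in\mathsf{PCS}(f)$'' appearing in the statement.

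For the ``if'' direction, suppose $S\in\mathsf{PCS}(g)$ and $g(S)\in\mathsf{PCS}(f)$. The first hypothesis gives $g(S)\subseteq[n]$ and $|g(S)|=|S|$; the second gives $n+1\notin f(g(S))$ and $|f(g(S))|=|g(S)|$. Chaining these, $(fg)(S)=f(g(S))$ avoids $n+1$ and has cardinality $|f(g(S))|=|g(S)|=|S|$, so $S\in\mathsf{PCS}(fg)$. This direction requires nothing beyond unwinding definitions.

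The ``only if'' direction is where the two auxiliary facts are used. Assume $S\in\mathsf{PCS}(fg)$, so $n+1\notin f(g(S))$ and $|f(g(S))|=|S|$. First I would rule out $n+1\in g(S)$: if $n+1=g(s)$ for some $s\in S$, then $f(g(s))=f(n+1)=n+1\in f(g(S))$, contradicting the hypothesis; hence $g(S)\subseteq[n]$, which is also what makes the assertion ``$g(S)\in\mathsf{PCS}(f)$'' meaningful. Then I would squeeze the chain
\[
|S|\;\ge\;|g(S)|\;\ge\;|f(g(S))|\;=\;|S|,
\]
which forces $|g(S)|=|S|$ and $|f(g(S))|=|g(S)|$. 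Together with $n+1\notin g(S)$ and $n+1\notin f(g(S))$, the first equality yields $S\in\mathsf{PCS}(g)$ and the second yields $g(S)\in\mathsf{PCS}(f)$, completing the proof.

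Since the argument is this transparent, I do not expect a genuine obstacle; the only point requiring a moment's care is that every element of $C_{n+1}$ fixes $n+1$, which is precisely what forbids $g(S)$ from containing $n+1$ while $f(g(S))$ does not.
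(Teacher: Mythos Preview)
Your proof is correct and matches the paper's argument essentially line for line: both directions are handled by unwinding the definition, using that $f(n+1)=n+1$ to rule out $n+1\in g(S)$, and using the cardinality chain $|S|\ge|g(S)|\ge|f(g(S))|=|S|$ (equivalently, that injectivity of $fg|_S$ forces injectivity of $g|_S$ and then of $f|_{g(S)}$). There is no substantive difference between your write-up and the paper's.
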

\begin{proof}
Suppose first that $S\in \mathsf{PCS}(fg)$.  Since $f(n+1)=n+1$ and $n+1\notin fg(S)$, it follows that $n+1\notin g(S)$.  Also, $fg|_S$ is injective and hence $g|_S$ is injective.  Thus $S$ is a partial cross-section for $g$.  Since $|fg(S)|=|S|=|g(S)|$ we also have that $f|_{g(S)}$ is injective.  As $n+1\notin fg(S)$, we see that $g(S)$ is a partial cross-section for $f$.  Conversely, assume that $S$ is a partial cross-section for $g$ and $g(S)$ is a partial cross-section for $f$.  Then $n+1\notin fg(S)$ and $|fg(S)|=|g(S)|=|S|$.  Thus $S$ is a partial cross-section for $fg$.
\end{proof}

\section{The isomorphism of algebras}
Let $\Bbbk$ be a commutative ring with unit and let $n\geq 0$.  Let $I(P_n,\Bbbk)$ be the incidence algebra of $P_n$ over $\Bbbk$.  It can be viewed as the $\Bbbk$-algebra with basis all ordered pairs $(X,Y)$ with $X\leq Y$ in $P_n$ and where the product is defined on basis elements by
\[(U,V)(X,Y) = \begin{cases}(X,V), & \text{if}\ Y=U\\ 0, & \text{else.}\end{cases}\]  In other words, this is the algebra of the category corresponding to the poset $P_n$.

We partially order the basis $C_{n+1}$ of $\Bbbk C_{n+1}$ by saying $f_{U,V}$ comes before $f_{X,Y}$ if $U\preceq X$ and $V\preceq Y$ and, similarly, we order the basis of $I(P_n,\Bbbk)$ by saying that $(U,V)$ comes before $(X,Y)$ if $U\preceq X$ and $V\preceq Y$.  We can now prove the main result.

\begin{Thm}\label{t:main}
There is an isomorphism $\p\colon \Bbbk C_{n+1}\to I(P_n,\Bbbk)$ of $\Bbbk$-algebras given by
\[\p(f) = \sum_{S\in \mathsf{PCS}(f)} (S,f(S))\] for $f\in \Bbb C_{n+1}$.
\end{Thm}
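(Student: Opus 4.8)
The plan is to verify in turn that $\p$ is multiplicative, unital, and bijective. For multiplicativity I would compute $\p(f)\p(g)$ directly from the definition of the incidence-algebra product: expanding, $\p(f)\p(g) = \sum (T,f(T))(S,g(S))$ with $T\in\mathsf{PCS}(f)$ and $S\in\mathsf{PCS}(g)$, and by the product rule the term $(T,f(T))(S,g(S))$ vanishes unless $g(S)=T$, in which case it equals $(S,f(g(S))) = (S,fg(S))$. Hence $\p(f)\p(g) = \sum_S (S,fg(S))$, the sum ranging over those $S$ with $S\in\mathsf{PCS}(g)$ and $g(S)\in\mathsf{PCS}(f)$; each such $S$ contributes exactly one term since $T=g(S)$ is determined by $S$, so no cancellation or repetition occurs. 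By Lemma~\ref{l:product} this index set is precisely $\mathsf{PCS}(fg)$, whence $\p(f)\p(g) = \p(fg)$. That $\p$ is unital is immediate: the identity map has every subset of $[n]$ as a partial cross-section and fixes it, so $\p(\mathrm{id}) = \sum_{S\subseteq[n]}(S,S)$, which is the identity of $I(P_n,\Bbbk)$.

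For bijectivity I would exploit the orderings introduced just before the theorem. Both the basis $C_{n+1}$ of $\Bbbk C_{n+1}$ and the basis of $I(P_n,\Bbbk)$ are indexed by the set of pairs $(X,Y)$ with $X\le Y$ in $P_n$ (via $f\leftrightarrow(X,Y)$ when $f=f_{X,Y}$), and one checks readily that the relation ``$(U,V)$ comes before $(X,Y)$ iff $U\preceq X$ and $V\preceq Y$'' is a partial order on this set. I claim the matrix of $\p$ with respect to these ordered bases is unipotent upper triangular. Indeed, fix $f=f_{X,Y}$ and let $S\in\mathsf{PCS}(f)$; by Proposition~\ref{p:basic.facts}(1)--(3) we have $S\preceq X$, $S\le f(S)$, and $f(S)\preceq Y$, so $(S,f(S))$ is a genuine basis element of $I(P_n,\Bbbk)$ that comes before $(X,Y)$. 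Thus every basis element occurring in $\p(f_{X,Y})$ comes before $(X,Y)$. Moreover $X\in\mathsf{PCS}(f)$ by Proposition~\ref{p:basic.facts}(4), and $f_{X,Y}(X)=Y$ directly from the defining formula, while $X$ is clearly the unique partial cross-section mapping to this pair, so the coefficient of $(X,Y)$ in $\p(f_{X,Y})$ is $1$.

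Choosing a linear extension of the partial order, the matrix of $\p$ becomes genuinely upper unitriangular; its determinant is $1$, so it is invertible over any commutative ring with unit, and $\p$ is a bijection. Together with the first paragraph this shows $\p$ is an isomorphism of $\Bbbk$-algebras. I do not anticipate a real obstacle: the content is entirely in Lemma~\ref{l:product} (for the homomorphism property) and Proposition~\ref{p:basic.facts} (for triangularity), and the only points needing a little care are the bookkeeping in the expansion of $\p(f)\p(g)$ and the observation that ``comes before'' is honestly a partial order so that a linear extension exists.
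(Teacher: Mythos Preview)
Your proposal is correct and follows essentially the same approach as the paper: multiplicativity via Lemma~\ref{l:product} and bijectivity via the unipotent upper triangular matrix argument drawn from Proposition~\ref{p:basic.facts}. The only differences are cosmetic---you reverse the order of the two verifications, add an explicit check of unitality, and spell out the passage to a linear extension of the partial order---none of which alters the substance of the argument.
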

\begin{proof}
It is immediate from Proposition~\ref{p:basic.facts} that $\p$ is well defined and that the matrix of $\p$ as a homomorphism of free $\Bbbk$-modules with respect to our preferred bases and orderings is unipotent upper triangular.  Indeed, $\p(f_{X,Y}) = (X,Y)+a$ where $a$ is a sum of certain terms $(U,V)$ with $U\prec X$ and $V\preceq Y$.  It follows that $\p$ is an isomorphism of $\Bbbk$-modules.  It remains to show that it is a ring homomorphism.  Indeed,
\begin{align*}
\p(f)\p(g) &= \sum_{T\in \mathsf{PCS}(f)}(T,f(T))\cdot \sum_{S\in \mathsf{PCS}(g)}(S,g(S))\\
&= \sum_{S\in \mathsf{PCS}(g), g(S)\in \mathsf{PCS}(f)}(S,f(g(S)))\\
&=\p(fg)
\end{align*}
where the last equality is by Lemma~\ref{l:product}.  This completes the proof.
\end{proof}

%\begin{Rmk}
%The construction in Theorem~\ref{t:main} can be generalized as follows.  If $M$ is any monoid of partial transformations of a set $X$, we can build an EI-category %$C(X,M)$ with objects the subsets of $X$ and morphisms the injective mappings that are restrictions of mappings from $M$.  There is then a $\Bbbk$-algebra homomorphism %$\p\colon \Bbbk M\to \Bbbk C(X,M)$ sending $f\in M$ to the sum of its restrictions.  This homomorphism need not be injective nor surjective, although it is easily checked %to be injective when $M$ is an inverse monoid.  Since $n+1$ is fixed by $C_{n+1}$, we can view $C_{n+1}$ as a monoid of partial mappings of $[n]$.  It is easy to check %that $C([n],C_{n+1})$ is isomorphic to the category of the poset $P_n$ since there is an  order-preserving injective map from $X$ to $Y$ if and only if $X\leq Y$ and it %is unique when it exists and it is a restriction of $f_{X,Y}$. Partial cross-sections of $f$ correspond precisely to injective restrictions of $f$.
%\end{Rmk}

\bibliography{stubib}
\bibliographystyle{abbrv}
\end{document}